\theoremstyle{plain}
\newtheorem{thm}{Theorem}[section]
\newtheorem{lem}[thm]{Lemma}
\newtheorem{cor}[thm]{Corollary}
\newtheorem{prop}[thm]{Proposition}
\theoremstyle{definition}
\newtheorem{rem}[thm]{Remark}
\numberwithin{equation}{section}
\def\F{{\mathbb F}}
\def\Q{{\mathbb Q}}
\def\R{{\mathbb R}}
\def\Z{{\mathbb Z}}
\def\C{{\mathbb C}}
\def\P{{\mathbb P}}
\def\GL{\mathop{\mathrm{GL}}\nolimits}
\def\Tr{\mathop{\mathrm{Tr}}\nolimits}
\def\dim{\mathop{\mathrm{dim}}\nolimits}
\def\div{\mathop{\mathrm{div}}\nolimits}
\def\Div{\mathop{\mathrm{Div}}\nolimits}
\def\H{\mathscr{H}}
\def\M{\mathscr{M}}
\def\F{\mathscr{F}}
\def\OO{\mathscr{O}}
\def\D{\mathscr{D}}
\def\l{\langle}
\def\r{\rangle}
\def\O{\mathop{{\null}\mathrm{O}}\nolimits}
\def\Supp{\mathop{\mathrm{Supp}}\nolimits}
\def\U{\mathrm{U}}
\def\S{\mathscr{S}}
\newcommand{\defeq}{\vcentcolon=}
\begin{document}

\title[Uniruledness of ball quotients]
{Uniruledness of some low-dimensional ball quotients}

\author{Yota Maeda}
\address{Department of Mathematics, Faculty of Science, Kyoto University, Kyoto 606-8502, Japan / Advanced Research Laboratory, Technology Infrastructure Center, Technology Platform, Sony Group Corporation, 1-7-1 Konan, Minato-ku}
\email{y.maeda.math@gmail.com}


\subjclass[2010]{Primary 14G35; Secondary 11G18, 11E39}
\keywords{ball quotients, Kodaira dimension, birational types, reflective modular forms, Hermitian forms}

\date{\today}

\begin{abstract}
We define reflective modular forms on complex balls and use a method of Gritsenko and Hulek to show that some ball quotients of dimensions 3, 4 and 5 are uniruled. 
We give examples of Hermitian lattices over the rings of integers of imaginary quadratic fields $\mathbb{Q}(\sqrt{-1})$ and $\mathbb{Q}(\sqrt{-2})$ for which the associated ball quotients are uniruled.
Our examples include the moduli space of 8 points on $\mathbb{P}^1$.
Moreover, we find that some of their Satake-Baily-Borel compactifications are rationally chain connected modulo certain cusps.
\end{abstract}

\maketitle

\section{Introduction}
Gritsenko and Hulek showed that some orthogonal modular varieties are uniruled \cite{Uniruledness}.
They used reflective modular forms \cite{Reflective} and the numerical criterion of uniruledness due to Miyaoka and Mori \cite{MM}.
In this paper, we apply this method to ball quotients associated with Hermitian forms of signatures $(1,3)$, $(1,4)$, and $(1,5)$, and show that some of them are uniruled.
Moreover, we prove that some of their Satake-Baily-Borel compactifications are rationally chain connected modulo certain divisors.

Let $F\defeq\Q(\sqrt{d})$ be an imaginary quadratic field
with its ring of integers $\OO_F$,
where $d < 0$ is a square-free negative integer.
Let $L$ be a free $\OO_F$-module of rank $n+1$
equipped with a Hermitian form $\l\ ,\ \r\colon L\times L\to F$ for some $n>1$ satisfying $\Tr_{F/\Q}\l x,x\r\in\Z$ for any $x\in L$.
A pair $(L,\l\ ,\ \r)$ is called a {\em Hermitian lattice}.
Here, Hermitian forms are supposed to be complex linear in the first argument and complex
conjugate linear in the second argument.
We fix an embedding $F \hookrightarrow \C$ and assume that $\l\ ,\ \r$ has signature $(1,n)$.

Let $\mathbf{U}(L)$ be the unitary group associated with $(L,\l\ ,\ \r)$.
It is a group scheme over $\Z$ defined by 
\[\mathbf{U}(L)(R)\defeq\{g\in\GL_R(L\otimes_{\Z}R)\mid \l gv,gw\r=\l v,w\r\ \mathrm{for\ any\ }v,w\in L\otimes_{\Z}R\}\]
for any $\Z$-algebra $R$.
Let us denote by $\U(L)\defeq\mathbf{U}(L)(\Z)$ below.
Let $D_L$ be the Hermitian symmetric domain associated with
 $\mathbf{U}(L)(\R)\cong\U(1,n)$, i.e.,
\[D_L\defeq\{[w]\in\P(L\otimes_{\OO_F}\C)\mid\l w,w\r >0 \}\]
which is biholomorphic to the $n$-dimensional complex ball.
For $\Gamma\subset\U(L)$, we put
\[ \F_L(\Gamma)\defeq \Gamma\backslash D_L. \]
It is called the {\em ball quotient}, a quasi-projective variety over $\C$.

Recall that an irreducible variety $X$ over $\C$ is called {\em uniruled} if there exists a dominant rational map $Y\times\P^1\dashrightarrow X$
where $Y$ is an irreducible variety over $\C$ with $\dim Y=\dim X-1$.
Uniruled varieties have Kodaira dimension $-\infty$.
The converse is not known in dimension greater than 3.

First, let us state the main theorem in this paper.
This is a criterion for when $\F_L(\Gamma)$ is uniruled, a unitary analogue of \cite[Theorem 2.1]{Uniruledness}.

\begin{thm}[Theorem \ref{uniruledness}]
\label{mainthm:uniruledness}
Let $F$ be an imaginary quadratic field and $(L,\l\ ,\ \r)$ be a Hermitian lattice over $\OO_F$ of signature $(1,n)$ for $n>1$.
Let $a,k > 0$ be positive integers satisfying $k > a(n+1)$.
If there exists a non-zero reflective modular form $F_{a,k}\in M_k(\Gamma,\chi)$ of weight $k$ for which the multiplicity of every irreducible component of $\div(F_{a,k})$ is less than or equal to $a$, then the ball quotient $\F_L(\Gamma')$ is uniruled for every arithmetic group $\Gamma'$ containing $\Gamma$.
\end{thm}
For the definition of reflective modular forms, see Section \ref{section:criterion} and Appendix \ref{Section:Reflective modular forms}.

We construct reflective modular forms on $D_L$ by using reflective modular forms on the type IV domain and we conclude that some ball quotients are uniruled.

Before stating applications, we introduce some invariants of quadratic lattices.
Let $(L_Q,(\ ,\ ))$ be the quadratic lattice of rank $2n+2$ over $\Z$ of signature $(2,2n)$, where $L_Q$ is $L$ considered as a free $\Z$-module and $(\ ,\ )\defeq\Tr_{F/\Q}\l\ ,\ \r$.
We say that $L_Q$ is {\em even} if $(v,v)\in 2\Z$ for all $v\in L_Q$.
We say that $L_Q$ is {\em $2$-elementary} if there exists a non-negative integer $\ell(L_Q)$ such that
\[L_Q^{\vee}/L_Q\cong (\Z/2\Z)^{\ell(L_Q)}.\]
Here $L_Q^{\vee}$ is the dual lattice of  $L_Q$, i.e.,
\[L_Q^{\vee}\defeq\{v\in L_Q\otimes_{\Z}\Q\mid (v,w)\in\Z\ \mathrm{for\ any\ }w\in L_Q\}.\]
We also define
\[\delta(L_Q)\defeq\begin{cases}0&\mathrm{if\ }(v,v)\in\Z\ \mathrm{for\ all}\ v\in L_Q^{\vee}\\
    1&\mathrm{if\ }(v,v)\not\in\Z\ \mathrm{for\ some}\ v\in L_Q^{\vee}).
    \end{cases}\]

We shall prove that certain ball quotients $X_L\defeq\F_L(\U(L))$ associated with Hermitian forms of signatures $(1,3)$, $(1,4)$ and $(1,5)$ are uniruled.
For $r\in L$,  the fractional ideal generated by 
\[\{\l\ell,r\r\mid\ell\in L\}\]
is denoted by $\Div(r)$.
The main applications of Theorem \ref{mainthm:uniruledness} are as follows.
\begin{thm}[Theorem \ref{Kodaira_dim_app}]
  \label{main_Yoshikawa}
  Assume that
  \begin{enumerate}
    \item $n=5$.
      \item $L_Q$ is even $2$-elementary, $\delta(L_Q)=0$, 
  and $\ell(L_Q)\leq 8$.
  Moreover, $\ell(L_Q)\leq 6$ if $F=\Q(\sqrt{-1})$ or $\Q(\sqrt{-3})$.
      \item  $2\cdot\Div(r)\subset\OO_F$ for any primitive $r\in L$ with $\l r,r\r=-1$.
  \end{enumerate}
  Then $X_L$ is uniruled.
\end{thm}
\begin{thm}[Theorem \ref{Kodaira_dim_app2}]
\label{main_Gritsenko}
    Assume that
  \begin{enumerate}
      \item $L_Q\cong\mathbb{U}\oplus\mathbb{U}(2)\oplus\mathbb{E}_8(-2)$.
      \item  $\Div(r)\subset\OO_F$ for any primitive $r\in L$ with $\l r,r\r=-2$ and $\Tr_{F/\Q}(\Div(r))\subset 2\Z$.
  \end{enumerate}
  Then  $X_L$ is uniruled.
\end{thm}
\begin{thm}[Theorem \ref{Kodaira_dim_app3}]
\label{main_Yoshikawa2}
    Assume that
  \begin{enumerate}
      \item $L_Q\cong\mathbb{U}\oplus\mathbb{U}\oplus\mathbb{D}_6$.
      \item  $2\cdot\Div(r)\subset\OO_F$ for any primitive $r\in L$ with $\l r,r\r=-1$.
  \end{enumerate}
  Then $X_L$ is uniruled.
\end{thm}
\begin{thm}[Theorem \ref{Kodaira_dim_app4}]
  \label{main_Yoshikawa3}
  Assume that
  \begin{enumerate}
      \item $L_Q\cong\mathbb{U}\oplus\mathbb{U}\oplus\mathbb{D}_4$, $\mathbb{U}\oplus\mathbb{U}(2)\oplus\mathbb{D}_4$ or $\mathbb{U}(2)\oplus\mathbb{U}(2)\oplus\mathbb{D}_4$.
      \item  $2\cdot\Div(r)\subset\OO_F$ for any primitive $r\in L$ with $\l r,r\r=-1$.
  \end{enumerate}
  Then $X_L$ is uniruled.
\end{thm}

\begin{rem}
More strongly, we can prove that the Satake-Baily-Borel compactification of $X_L$ is rationally chain connected modulo certain cusps; see Theorem \ref{rem:rationally_chain} and Remark \ref{rem:rationally_kondo}.
\end{rem}

In Table \ref{table:lattices_and_invariants}, we give examples of even $2$-elementary quadratic lattices over $\Z$ of signature $(2,10)$ satisfying $\delta(L_Q) = 0$.
Here, $\mathbb{U}$ is the hyperbolic lattice of signature $(1,1)$, and $\mathbb{E}_8$ and $\mathbb{D}_k$ are (positive definite) root lattices associated with the root systems $E_8$ and $D_k$,  respectively.

\vspace{0.2in}
\begin{table}[h]
\begin{center}
\renewcommand{\arraystretch}{1.5}
\caption{Lattices and invariants}
\label{table:lattices_and_invariants}
    \begin{tabular}{|c||c|c|} \hline
     Quadratic lattices $L_Q$&$\ell(L_Q)$&$\delta(L_Q)$\\ \hline
      $\mathbb{U}\oplus \mathbb{U}(2)\oplus \mathbb{E}_8(-2)$&10&0\\ \hline
      $\mathbb{U}\oplus \mathbb{U}\oplus \mathbb{E}_8(-2)$&8&0\\\hline
      $\mathbb{U}\oplus \mathbb{U}(2)\oplus \mathbb{D}_4(-1)\oplus \mathbb{D}_4(-1)$&6&0\\\hline
      $\mathbb{U}\oplus \mathbb{U}\oplus \mathbb{D}_4(-1)\oplus \mathbb{D}_4(-1)$&4&0\\\hline
      $\mathbb{U}\oplus \mathbb{U}\oplus \mathbb{D}_8(-1)$&2&0\\\hline
      $\mathbb{U}\oplus \mathbb{U}\oplus \mathbb{E}_8(-1)$&0&0\\\hline
    \end{tabular}
  \renewcommand{\arraystretch}{1.0}
  \end{center}
\end{table}
\vspace{0.2in}

\begin{rem}
If $L_Q\cong\mathbb{U}\oplus \mathbb{U}(2)\oplus \mathbb{D}_4(-1)\oplus \mathbb{D}_4(-1)$, then the ball quotient $X_L$ coincides with the moduli space of 8 points in $\P^1$ (see \cite{Kondo3}), which is thus uniruled by Theorem \ref{main_Yoshikawa}.
\end{rem}

\begin{cor}
\label{maincor:exist}
\begin{enumerate}
    \item   For $F=\Q(\sqrt{-1})$ or $\Q(\sqrt{-2})$, there exists a Hermitian lattice $L$ of signature $(1,3)$ over $\OO_F$ such that the associated ball quotient $X_L$ is uniruled.
    \item   For $F=\Q(\sqrt{-1})$, there exists a Hermitian lattice $L$ of signature $(1,4)$ over $\OO_F$ such that the associated ball quotient $X_L$ is uniruled.
    \item   For $F=\Q(\sqrt{-1})$ or $\Q(\sqrt{-2})$, there exists a Hermitian lattice $L$ of signature $(1,5)$ over $\OO_F$ such that the associated ball quotient $X_L$ is uniruled.
\end{enumerate}
\end{cor}
(See Proposition \ref{exist_app}.)

Here is a sketch of the proof of the main results.
First, we construct reflective modular forms on the Hermitian symmetric domain $D_L$ associated with a Hermitian lattice $L$ of signatures $(1,3)$, $(1,4)$, and $(1,5)$ by embedding $D_L$ into the Hermitian symmetric domain associated with the quadratic lattice $L_Q$ of signatures $(2,6)$, $(2,8)$, and $(2,10)$, and pulling back the strongly reflective modular forms constructed by Yoshikawa \cite{YoshikawaII}, and Gritsenko and Hulek  \cite{GH}.
Then, we apply the method of Gritsenko and Hulek \cite{Uniruledness} to ball quotients and show that some of them are uniruled.
Note that Gritsenko and Hulek used the numerical criterion of uniruledness due to Miyaoka and Mori \cite{MM}.

The outline of this paper is as follows.
In Section \ref{section:unitary Shimura varieties}, we introduce the notation about ball quotients and explain the embedding of $D_L$ into the Hermitian symmetric domain associated with $L_Q$.
In Section \ref{section:criterion}, we prove a criterion for the uniruledness of ball quotients in terms of reflective modular forms.
In Section \ref{Section:Kodaira}, we prove the main theorem of this paper.
In Appendix \ref{Section:Reflective modular forms}, we review some properties of reflective modular forms and recall the results of Yoshikawa, and Gritsenko and Hulek.
In Appendix \ref{example}, we give some examples of Hermitian lattices satisfying the assumptions of the main theorem.

\begin{rem}
We will only prove the uniruledness for ball quotients associated with Hermitian forms of signatures $(1,3)$, $(1,4)$, and $(1,5)$.
Let $L$ be the Hermitian lattice over $\OO_F$ of signature $(1,n)$.
We embed $D_L$ into the Hermitian symmetric domain associated with the quadratic lattice $L_Q$ of signature $(2,2n)$.
In fact, we can prove the uniruledness criterion for $n>1$ by using the method of Gritsenko and Hulek \cite{Uniruledness}; see Theorem \ref{uniruledness}.
However, in order to apply Theorem \ref{uniruledness}, we need reflective modular forms of high weight.
We shall construct reflective modular forms  by restricting the strongly reflective modular forms constructed by Yoshikawa, and  Gritsenko and Hulek. 
Yoshikawa constructed reflective modular forms on the Hermitian symmetric domain associated with quadratic forms of signature $(2,m)$ for $m<12$.
When $m=8$, the reflective modular forms constructed in \cite{YoshikawaII} are not strongly reflective except the  $L_Q=\mathbb{U}\oplus\mathbb{U}\oplus\mathbb{D}_6$ case.
Hence we can use strongly reflective modular forms constructed by Yoshikawa only when $m=8,10$, i.e., $n=4,5$.
 Gritsenko and Hulek constructed strongly reflective modular forms on the Hermitian symmetric domains associated with quadratic forms of signature $(1,3)$, $(1,4)$, and $(1,5)$, respectively.
Hence, we can prove the  uniruledness of ball quotients for $n=3,4,5$.
\end{rem}
\begin{rem}

\begin{enumerate}

\item  Kond\={o} \cite{Kondo1, Kondo2}, Gritsenko, Hulek and Sankaran \cite{GHS}, and Ma \cite{Ma} studied the Kodaira dimension of orthogonal modular varieties related to  moduli spaces of polarized K3 surfaces, and proved that some of them have non-negative Kodaira dimension or,  more precisely, orthogonal modular varieties are of general type if the polarization degree is sufficiently large.
  Kond\={o}, and Gritsenko, Hulek and Sankaran used modular forms of low weight vanishing on ramification divisors constructed by the Borcherds lift of the inverse of the Ramanujan delta function.
  Ma, Ohashi and Taki \cite{MOT} showed some ball quotients associated with Eisenstein lattices are rational.
  We study Hermitian lattices differently from them.

\item Recently, the author and Odaka \cite{MO} showed some orthogonal or unitary modular varieties are Fano  by using special reflective modular forms.
\end{enumerate}
\end{rem}

\subsection*{Acknowledgements}
The author would like to express his gratitude to his adviser, Tetsushi Ito, for his helpful comments and warm encouragement.
In particular, he told the author a lot of candidates of Hermitian lattices satisfying the conditions stated in the theorems.
The author would also like to thank Ken-Ichi Yoshikawa, Yuji Odaka and Haowu Wang for insightful suggestions and constructive discussions.
Yoshikawa explained the reflective modular forms he constructed in \cite{YoshikawaII} to the author.
Odaka referred the author to the paper \cite{HM} about rationally chain connected-ness.
Wang pointed out mistakes in an earlier
version of this paper.
Finally, he is deeply grateful to the
anonymous referees for helpful suggestions to make the exposition more readable.
This work is supported by JST ACT-X JPMJAX200P.

\section{Ball quotients}
\label{section:unitary Shimura varieties}
We start by studying the ramification of the canonical quotient map $\pi_{\Gamma}\colon D_L\to\F_L(\Gamma)$.
For a primitive element $r\in L$ with $\l r,r\r<0$, we define the  {\em reflection}  $\tau_r\in\mathbf{U}(L)(\Q)$ with respect to $r$ as follows:
\[\tau_r(\ell)\defeq\ell -\frac{2\l \ell,r\r}{\l r,r\r}r.\]

\begin{prop}
\label{ramification_divisors}
If $n>1$, then the ramification divisors of $\pi_{\Gamma}\colon D_L\to\F_L(\Gamma)$ contain the set
  \[\bigcup_{\substack{r\in L/\{\pm 1\},\ r\ \mathrm{is\ primitive}\\ \tau_r\in\Gamma \ \mathrm{or}\  -\tau_r\in\Gamma}}\{w\in D_L\mid \l w,r\r=0\}.\]
  If $F\neq\Q(\sqrt{-1})$ and $\Q(\sqrt{-3})$, then the ramification divisors are equal to the above set set-theoretically.
\end{prop}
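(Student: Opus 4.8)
The plan is to identify the ramification divisor of $\pi_{\Gamma}\colon D_L\to\F_L(\Gamma)$ with the union of fixed loci of the complex reflections contained in $\Gamma$, and then to classify these reflections. I would start from the general local structure of quotients by properly discontinuous actions: near a point $w\in D_L$ with (finite) stabilizer $H=\Stab_{\Gamma}(w)$, the map $\pi_{\Gamma}$ is modeled on the linear quotient $T_wD_L\to H\backslash T_wD_L$, whose codimension-one branch locus is exactly the union of the fixed hyperplanes of the complex reflections in $H$. Because $\U(L)(\R)\cong\U(1,n)$ acts on $D_L$ through its projectivization and the center (the scalar roots of unity in $\OO_F$) acts trivially, I would work with the image of $\Gamma$ in the projective group; thus the ramification divisor is the union, over all $g\in\Gamma$ inducing a nontrivial automorphism of $D_L$ that fixes a hypersurface pointwise, of those fixed hypersurfaces.

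Next I would describe such elements explicitly. An element $g\in\U(L)$ fixes a hypersurface of $D_L$ pointwise precisely when, as a unitary transformation of $L\otimes_{\OO_F}\C$, it has an eigenspace of dimension $n$. Since $g$ is unitary, eigenspaces for distinct eigenvalues are orthogonal, so this eigenspace is the orthogonal complement $r^{\perp}$ of an anisotropic vector $r$ (necessarily with $\l r,r\r<0$, as otherwise $r^{\perp}\cap D_L=\emptyset$), the complementary eigenline is $\C r$, and the fixed hypersurface is $\{w\in D_L\mid\l w,r\r=0\}$. Writing $\lambda$ for the eigenvalue on $r^{\perp}$ and $\mu$ for the eigenvalue on $\C r$, nontriviality in the projective group is the condition $\lambda\neq\mu$.

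For the inclusion I would simply compute from the defining formula that $\tau_r$ acts by $+1$ on $r^{\perp}$ and by $-1$ on $\C r$, so it is such a reflection with $(\lambda,\mu)=(1,-1)$; likewise $-\tau_r$ has $(\lambda,\mu)=(-1,1)$ and induces the \emph{same} automorphism of $D_L$. Hence whenever $\tau_r\in\Gamma$ or $-\tau_r\in\Gamma$ the divisor $\{w\mid\l w,r\r=0\}$ lies in the fixed locus of a nontrivial element of $\Gamma$, so it is contained in the ramification divisor. Since $\tau_{cr}=\tau_r$ for every scalar $c$, the quotient by $\pm1$ and the primitivity of $r$ only normalize the choice of representative on a given line. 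This gives the containment for all $n>1$.

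The essential point is the reverse inclusion, and this is where the hypothesis $F\neq\Q(\sqrt{-1}),\Q(\sqrt{-3})$ and the main obstacle lie. I would argue that $\lambda$ and $\mu$ are roots of unity in $F$: an element of $\Gamma$ fixing a point of the ball is elliptic and lies in a finite subgroup by proper discontinuity, so its eigenvalues are roots of unity, and since $g\in\U(L)(\Z)$ is $\OO_F$-linear its characteristic polynomial has coefficients in $\OO_F$; as $\lambda$ occurs with multiplicity $n$, if $\lambda\notin F$ then each $F$-conjugate of $\lambda$ would also occur with multiplicity $n$, forcing degree $\geq2n>n+1$ when $n>1$, a contradiction, and then $\mu\in F$ as well. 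For $F\neq\Q(\sqrt{-1}),\Q(\sqrt{-3})$ the only roots of unity in $F$ are $\pm1$, so $(\lambda,\mu)\in\{(1,-1),(-1,1)\}$, forcing $g=\tau_r$ or $g=-\tau_r$ as actual linear maps; thus every ramification divisor is of the listed form. The hard part is exactly this eigenvalue analysis — showing that a divisorial fixed locus forces the associated $\OO_F$-linear reflection to have eigenvalues in $\{\pm1\}$ — together with the bookkeeping that $\tau_r$ and $-\tau_r$ define the same automorphism and that $r$ is determined only up to a scalar. For $F=\Q(\sqrt{-1})$ or $\Q(\sqrt{-3})$ the extra roots of unity ($\pm i$, respectively the primitive sixth roots) permit genuine higher-order complex reflections whose fixed hyperplanes need carry no $\tau_r\in\Gamma$, so only the inclusion can be claimed.
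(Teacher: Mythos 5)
Your proposal is correct, and its overall architecture matches the paper's: both identify the ramification divisor with the union of divisors fixed pointwise by quasi-reflections in $\Gamma$, and both then reduce to classifying those quasi-reflections. The difference is in how that classification is obtained. The paper simply cites Behrens (Corollary 3 of \emph{Singularities of ball quotients}) for the fact that every quasi-reflection has order $2$ when $F\neq\Q(\sqrt{-1}),\Q(\sqrt{-3})$ (and order $4$ resp.\ $6$ in the exceptional cases), whereas you reprove this from scratch: a divisorially-fixing element of $\Stab_\Gamma(w)$ is a finite-order unitary map with an eigenvalue $\lambda$ of multiplicity $n$ and a complementary eigenvalue $\mu$ on an anisotropic line $\C r$; since the characteristic polynomial has coefficients in $\OO_F$ and $2n>n+1$ for $n>1$, the eigenvalues lie in $F$, hence are roots of unity of $F$, which are $\pm1$ outside the two exceptional fields, forcing $g=\pm\tau_r$. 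This eigenvalue argument is sound (including the correct use of orthogonality of distinct eigenspaces for the indefinite Hermitian form and the observation that $\tau_r$ and $-\tau_r$ induce the same automorphism of $D_L$), and it is exactly where the hypothesis $n>1$ enters, which the paper leaves implicit inside the citation. What your route buys is a self-contained proof and a transparent explanation of why $\Q(\sqrt{-1})$ and $\Q(\sqrt{-3})$ are exceptional (extra roots of unity permit genuine higher-order complex reflections); what the paper's route buys is brevity. The only minor bookkeeping point worth flagging is the choice of a \emph{primitive} representative $r\in L$ on the eigenline when $\OO_F$ is not a PID, but since both $H(r)$ and $\tau_r$ depend only on the line $Fr$, this does not affect the statement.
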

\begin{proof}
The ramification divisors are fixed divisors by the action of quasi-reflections in $\Gamma$.
If $F\neq\Q(\sqrt{-1})$ and $\Q(\sqrt{-3})$,  then, by \cite[Corollary 3]{Behrens}, every quasi-reflection has order 2, so it is written as $\pm\tau_r$ for a primitive element $r\in L$; see also \cite[Corollary 2.13]{GHS}.
If $F=\Q(\sqrt{-1})$ (resp. $\Q(\sqrt{-3})$), quasi-reflections may have order $4$ (resp. $3$ or $6$).
\end{proof}
  Similarly, we define the reflection $\sigma_r\in\O^{+}(L_Q)$ for a primitive element $r\in L_Q$ as follows.
  \[\sigma_r(\ell)\defeq\ell-\frac{2(\ell,r)}{(r,r)}r.\]
  See the notation of orthogonal groups; see Appendix \ref{Section:Reflective modular forms}.

In the following, we recall how to embed the Hermitian symmetric domain associated with a Hermitian form into  the Hermitian symmetric domain associated with a quadratic form.
We follow Hofmann's paper \cite{Hofmann}.

Let $(L_Q,(\ ,\ ))$ be the quadratic lattice of rank $2n+2$ over $\Z$ of signature $(2,2n)$, where $L_Q$ is $L$ considered to be a free $\Z$-module and $(\ ,\ )\defeq\Tr_{F/\Q}\l\ ,\ \r$.
We put
\[\D_{L_Q}\defeq\left\{[w]\in\P(L_Q\otimes_{\Z}\C)\mid (w,w)=0,(w,\overline{w})>0\right\}^+.\]
Here, $+$ denotes one of its two connected components.
$\D_{L_Q}$ is the Hermitian symmetric domain associated with $\O^+(2,2n)$.
Then, we get an inclusion
\[\U(L)\hookrightarrow\O^+(L_Q)\]
so that
\begin{eqnarray}
  \label{injection}
  \iota\colon D_L\hookrightarrow \D_{L_Q}.
\end{eqnarray}
For $\lambda\in L$ with $\lambda\neq 0$, we define the {\em Heegner divisors} $H(\lambda)\subset D_L$ and $\H(\lambda)\subset\D_{L_Q}$ by
\begin{align*}
H(\lambda)&\defeq\{w\in D_L\mid \l \lambda,w\r=0\},\\
\H(\lambda)&\defeq\{w\in \D_{L_Q}\mid (\lambda,w)=0\}.
\end{align*}
This map (\ref{injection}) sends Heegner divisors on $D_L$ to Heegner divisors on $\D_{L_Q}$.
\begin{lem}
  \label{special}
  For $\lambda\in L$ with $\lambda\not=0$, we have 
  \[\iota(H(\lambda))=\iota(D_L)\cap \H(\lambda)\subset \D_{L_Q}\]
  as sets.
\end{lem}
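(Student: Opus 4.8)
The plan is to make the embedding $\iota$ of (\ref{injection}) completely explicit and then read off the defining equations of $H(\lambda)$ and $\H(\lambda)$; once the two vanishing conditions are matched, the equality of sets is immediate from the injectivity of $\iota$.

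First I would recall the construction of $\iota$ following Hofmann \cite{Hofmann}. The fixed embedding $\sigma\colon F\hookrightarrow\C$ and its conjugate $\bar\sigma$ give an isomorphism $F\otimes_\Q\C\cong\C\times\C$, which induces a splitting
\[
L_Q\otimes_\Z\C \;=\; L\otimes_\Z\C \;\cong\; V\oplus\overline{V},\qquad V\defeq L\otimes_{\OO_F,\sigma}\C,
\]
where $V$ is the $\C$-vector space carrying the Hermitian form $\l\ ,\ \r$, so that $D_L\subset\P(V)$, and complex conjugation on $L_Q\otimes_\Z\C$ interchanges the two summands. The map $\iota$ is the projectivization of the $\C$-linear summand inclusion $V\hookrightarrow L_Q\otimes_\Z\C$, restricted to $D_L$; by construction its image lies in $\D_{L_Q}$.

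The heart of the matter is to compare the two forms under this splitting. Extending $(\ ,\ )=\Tr_{F/\Q}\l\ ,\ \r$ $\C$-bilinearly to $L_Q\otimes_\Z\C$, I would check that both summands $V$ and $\overline{V}$ are totally isotropic and that the induced pairing between $V$ and $\overline{V}$ is, up to complex conjugation, the Hermitian form $\l\ ,\ \r$. Both facts follow from a short computation using $(v,w)=\Tr_{F/\Q}\l v,w\r=2\Re\l v,w\r$ on the real lattice $L_Q\otimes_\Z\R$ and the definition of the splitting. Granting this, for $\lambda\in L\subset L_Q$ with image $\lambda_V\in V$ and any $w\in V$ one writes $\lambda=\lambda_V+\overline{\lambda_V}$ and obtains
\[
(\lambda,w) \;=\; (\lambda_V,w)+(\overline{\lambda_V},w) \;=\; (\overline{\lambda_V},w),
\]
since $V$ is totally isotropic; and the remaining term is a nonzero multiple of $\overline{\l\lambda_V,w\r}$ by the pairing identity. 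As $\lambda\mapsto\lambda_V$ is exactly the natural map $L\to V$ entering the definition of $H(\lambda)$, this yields the equivalence
\[
(\lambda,\iota(w))=0 \iff \l\lambda,w\r=0 .
\]

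Finally the set equality follows formally. For $w\in D_L$ the equivalence above says $w\in H(\lambda)$ if and only if $\iota(w)\in\H(\lambda)$; combined with the injectivity of $\iota$ and with $\iota(D_L)\subset\D_{L_Q}$ this gives $\iota(H(\lambda))=\iota(D_L)\cap\H(\lambda)$, as claimed. I expect the only delicate point to be the bookkeeping of the sesquilinear-versus-bilinear conventions in the pairing identity — in particular confirming that the proportionality constant between $(\lambda,\iota(w))$ and $\l\lambda,w\r$ (or its conjugate) is nonzero — but there is no conceptual obstacle: the lemma is essentially an unwinding of the definition of $\iota$.
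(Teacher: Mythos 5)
Your proposal is correct and is essentially the paper's own argument: the paper likewise reduces the lemma to the single identity that $(\lambda,\iota(z))$ is a nonzero multiple of $\l\lambda,z\r$ (it takes $\iota(z)=z-\sqrt{-1}(iz)$ and computes $(\lambda,\iota(z))=2(\Re\l\lambda,z\r+\sqrt{-1}\Im\l\lambda,z\r)$ directly from $(\ ,\ )=2\Re\l\ ,\ \r$). Your packaging via the splitting $V\oplus\overline{V}$ and the isotropy of the summands is just a structural reformulation of that same computation, so there is nothing to add.
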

\begin{proof}
  We follow \cite[Lemma 3]{Hofmann}.
  By \cite[section 4]{Hofmann}, to simplify the discussion, we may take $\iota(z)=z-\sqrt{-1}(iz)$ for $z\in L$, where $i$ denotes the endomorphism of $L_Q\otimes_{\Z}\R$ induced by multiplication by $\sqrt{-1}$ on the complex vector space $L\otimes_{\OO_F}\C$.
  Then, 
  \begin{align*}
  (\lambda,\iota(z))&=(\lambda,z-\sqrt{-1}(iz))\\
  &=(\lambda,z)-(\lambda,\sqrt{-1}(iz))\\
  &=(\lambda,z)-\sqrt{-1}(\lambda,(iz))\\
  &=2(\Re\l\lambda,z\r-\sqrt{-1}\Re\l\lambda,(iz)\r)\\
   &=2(\Re\l\lambda,z\r+\sqrt{-1}\Im\l\lambda,z\r).
  \end{align*}
 Hence, $ (\lambda,\iota(z))=0$ occurs if and only if $\l\lambda,z\r=0$ holds.
\end{proof}
\begin{lem}
  \label{special2}
  \begin{enumerate}
      \item  Let $\lambda,\lambda'\in L$ be elements satisfying 
 $\l \lambda,\lambda\r=\l \lambda',\lambda'\r\not=0$.
  Then,
  \[H(\lambda)=H(\lambda')\]
  if and only if $\lambda=a\lambda'$ for some $a\in\OO_F^{\times}$.
  \item  Let $\lambda,\lambda'\in L_Q$ be elements satisfying 
 $(\lambda,\lambda)=(\lambda',\lambda')$.
  Then, 
  \[\H(\lambda)=\H(\lambda')\]
  if and only if $\lambda=\pm\lambda'$.
  \end{enumerate}
\end{lem}
\begin{proof}
 (1) If $\lambda=a\lambda'$ for some $a\in\OO_F^{\times}$, then we have $\l v,\lambda\r=\overline{a}\l v,\lambda'\r$ for any $v\in L\otimes_{\OO_F}F$.
  Hence we have $H(\lambda)=H(\lambda')$.
  Conversely, if $H(\lambda)=H(\lambda')$, then $\lambda^{\perp}=(\lambda')^{\perp}$ in $L\otimes_{\OO_F}F$.
  Hence, $\lambda=a\lambda'$ for some $a\in F^{\times}$.
 Since $\l \lambda,\lambda\r=a\overline{a}\l \lambda',\lambda'\r\not=0$, we have $a\overline{a}=1$ and hence we have $a\in\OO_F^{\times}$.
 
 Claim (2) can be shown in a similar way.
\end{proof}

Moreover, the map (\ref{injection}) sends cusps of $D_L$ with respect to $\U(L)$ to cusps of $\D_{L_Q}$ with respect to $\O^+(L_Q)$, so the pullback of a cusp form on $\D_{L_Q}$ to $D_L$ by (\ref{injection}) is also a cusp form; 
see \cite[Proposition 2]{Hofmann} for details.

\section{Uniruledness criterion}
\label{section:criterion}
Let $M_k(\Gamma,\chi)$ be the space of modular forms of weight $k$ on $D_L$ with respect to a finite index subgroup $\Gamma\subset\U(L)$ and a character $\chi\colon\U(L)\to\C^{\times}$, similarly to the orthogonal case; see Appendix \ref{Section:Reflective modular forms}.
We also define the reflective modular forms on $D_L$ in the same way as the orthogonal case.
We can prove that certain ball quotients are uniruled under some assumption on the vanishing orders and weights of reflective modular forms.
\begin{thm}
\label{uniruledness}
Let $F$ be an imaginary quadratic field and $(L,\l\ ,\ \r)$ be a Hermitian lattice over $\OO_F$ of signature $(1,n)$ for $n>1$.
Let $a,k > 0$ be positive integers satisfying $k > a(n+1)$.
If there exists a non-zero reflective modular form $F_{a,k}\in M_k(\Gamma,\chi)$ of weight $k$ for which the multiplicity of every irreducible component of $\div(F_{a,k})$ is less than or equal to $a$, then the ball quotient $\F_L(\Gamma')$ is uniruled for every arithmetic group $\Gamma'$ containing $\Gamma$.
\end{thm}
\begin{proof}
It suffices to show the theorem for $\Gamma=\Gamma'$.
  First, we assume $F\neq\Q(\sqrt{-1})$ and $\Q(\sqrt{-3})$.
  Then by \cite[Corollary 3]{Behrens}, the ramification divisors are fixed divisors defined by reflections.
  We take the canonical toroidal compactification $X\defeq\overline{\F_L(\Gamma)}$ of $\F_L(\Gamma)$.
  We can prove the theorem in the same way as \cite[Theorem 2.1]{Uniruledness} by using the numerical criterion of uniruledness due to Miyaoka and Mori \cite{MM}. 
  We denote the ramification divisors in $X$ by $B=\sum_r B_r$.
  Let $D=\sum_{\alpha} D_{\alpha}$ be the boundary and $L$ be the line bundle of modular forms of weight 1.
  Then, we have
  \begin{eqnarray}
  \label{bundle_1}
    K_{X}=(n+1)L-\frac{1}{2}B-D.
  \end{eqnarray}
  
  By the assumption on $F_{a,k}$, we have
  \begin{align}
  \label{bundle_2}
    kL&=\frac{1}{2}\sum_r m_rB_r+\sum_{\alpha}\delta_{\alpha}D_{\alpha}\notag\\
    &=\frac{1}{2}\left\lbrace mB+\sum_r (m_r-m)B_r\right\rbrace+\sum_{\alpha}\delta_{\alpha}D_{\alpha}
  \end{align}
  for some non-negative rational numbers $m_r$ and  $\delta_{\alpha}$ with $m_r\leq a$.
  By substituting (\ref{bundle_2}) for (\ref{bundle_1}), we have
  \begin{eqnarray}
  \label{-K}
  -K_{X}=\left\lbrace\frac{k}{m}-n-1\right\rbrace L+\sum_r\frac{m-m_r}{2m}B_r+\sum_{\alpha}\frac{m-\delta_{\alpha}}{m}D_{\alpha}.
  \end{eqnarray} 
  For a resolution $\widetilde{X}\to X$, we obtain 
   \[-K_{\widetilde{X}}=\left\lbrace\frac{k}{m}-n-1\right\rbrace L+\sum_r\frac{m-m_r}{2m}B_r+\sum_{\alpha}\frac{m-\delta_{\alpha}}{m}D_{\alpha}+\sum_{\beta}\epsilon_{\beta}E_{\beta}\]
   where $E_{\beta}$ are the exceptional divisors and $\epsilon_{\beta}$ are non-negative integers.
   Here, $B_r$ and $D_{\alpha}$ denote the strict transforms of the corresponding divisors on $X$.
   In the same way as the proof of \cite[Theorem 2.1]{Uniruledness}, for a curve $C$ on $\widetilde{X}$ which does not pass through the boundary and the singular locus of $\F_L(\Gamma)$, it suffices to show  $K_{\widetilde{X}}\cdot C<0$.
   Now 
    \[-K_{\widetilde{X}}\cdot C=\left\lbrace\left(\frac{k}{m}-n-1\right)L+\sum_r\frac{m-m_r}{2m}B_r+\sum_{\alpha}\frac{m-\delta_{\alpha}}{m}D_{\alpha}+\sum_{\beta}\epsilon_{\beta}E_{\beta}\right\rbrace\cdot C.\]
    From the assumption on $C$, we have $D_{\alpha}\cdot C=E_{\beta}\cdot C=0$.
    We also have $\frac{m-m_r}{2m}B_r\cdot C>0$ and $\left(\frac{k}{m}-n-1\right)L\cdot C>0$ by \cite[Theorem 2.1]{Uniruledness}, so it follows that $K_{\widetilde{X}}\cdot C<0$.
    This implies $\F_L(\Gamma)$ is uniruled by the numerical criterion of Miyaoka and Mori.
    When $F=\Q(\sqrt{-1})$ or $\Q(\sqrt{-3})$, the ramification divisors might be defined by the elements of order 4, or 3 and 6.
    The same calculation as above holds in these cases, from which we can conclude that $\F_L(\Gamma)$ is uniruled.

\end{proof}

\begin{rem}
 Gritsenko, Hulek and Sankaran showed that the ramification divisors of orthogonal modular varieties of dimension larger than 2 are induced by $g\in\Gamma$ such that $g$ or $-g$ is a reflection; see \cite[Corollary 2.13]{GHS}.
 Hence, the uniruledness criterion \cite[Theorem 2.1]{Uniruledness} for orthogonal modular varieties is proved for $n>2$.
 On the other hand, Behrens \cite[Corollary 3]{Behrens} proved the same claim for ball quotients for $n>1$, so Theorem \ref{uniruledness} needs this restriction.
\end{rem}

As we stated in Introduction, we get the stronger result of rationally chain connected-ness.
Before proving them, let us define the notion of naked cusps.
We call a cusp \textit{naked} if it not contained in all branch divisors; see \cite[Definition 2.7]{MO}.
Now, we shall show that the Satake-Baily-Borel compactification of $\F_L(\Gamma)$ is rationally chain connected modulo naked cusps according to \cite{HM}.
For the precise definition, see \cite[Section 1, 2]{HM}.

\begin{thm}
    \label{rem:rationally_chain}
    In addition to the assumption in Theorem \ref{uniruledness}, suppose that $F_{a,k}$ is a special reflective modular form in the sense of \cite[Assumption 2.1 (1)]{MO}.
    Then, the Satake-Baily-Borel compactification of $\F_L(\Gamma)$ is rationally chain connected modulo naked cusps.
\end{thm}
\begin{proof}
Since $F_{a,k}$ is a special reflective modular form, $\div(F_{a,k})$ is a constant multiple of $B$ in the $\Q$-Picard group.
Hence, by the same method as in the proof of \cite[Theorem 2.4]{MO}, there exists a positive rational number $s\in\Q$ (denoted by $s(x)-1$ in \cite{MO}) so that
\[-K_{\overline{\F_L(\Gamma)}^{\mathrm{SBB}}}=sL\]
on the Satake-Baily-Borel compactification $\overline{\F_L(\Gamma)}^{\mathrm{SBB}}$.
Here, we used the result concerning Hilzebruch's proportionality principle by Mumford and the construction of the Stake-Baily-Borel compactification. 
Note that the log pair $(\overline{\F_L(\Gamma)}^{\mathrm{SBB}}, 0)$ is log canonical and the locus of log canonical singularities are naked cusps; see \cite[Theorem 2.4, Lemma 2.9]{MO}.
Therefore, it follows that $\overline{\F_L(\Gamma)}^{\mathrm{SBB}}$ is rationally chain connected modulo naked cusps from \cite[Theorem 1.2]{HM}.
\end{proof}
\begin{rem}
\label{rem:rationally_kondo}
Note that not all varieties appearing in this paper are not known to be rationally chain connected. When the branch divisors are known, such as in the case of \cite{Kondo3}, we can conclude that they are rationally chain connected.
    
\end{rem}

\section{Applications to the uniruledness of ball quotients}
\label{Section:Kodaira}
In this section, we show certain ball quotients associated with Hermitian forms of signatures $(1,3)$, $(1,4)$, and $(1,5)$ are uniruled.
Note that for $F\neq\Q(\sqrt{-1})$ and  $\Q(\sqrt{-3})$, we have $\OO_F^{\times}/\{\pm 1\} =\{1\}$.
On the other hand, we have $\OO_{\Q(\sqrt{-1})}^{\times}/\{\pm 1\} =\{1,\sqrt{-1}\}$ and $\OO_{\Q(\sqrt{-3})}^{\times}/\{\pm 1\} =\{1,\omega,\omega^2\}$ where $\omega$ is a primitive third root of unity.
For modular forms on $\D_{L_Q}$, see Appendix \ref{Section:Reflective modular forms}.

\begin{thm}
  \label{Kodaira_dim_app}
Assume that
  \begin{enumerate}
  \item $n=5$.
      \item $L_Q$ is even $2$-elementary, $\delta(L_Q)=0$
  and $\ell(L_Q)\leq 8$.
  Moreover, $\ell(L_Q)\leq 6$ if $F=\Q(\sqrt{-3})$.
      \item  $2\cdot\Div(r)\subset\OO_F$ for any primitive $r\in L$ with $\l r,r\r=-1$.
  \end{enumerate}
  Then $X_L$ is uniruled.
\end{thm}
\begin{proof}
 Since $L_Q$ is $2$-elementary with $\ell(L_Q)\leq 8$ and $\delta(L_Q)=0$, we have a strongly reflective modular form $\Psi_{L_Q}$ on $\D_{L_Q}$ for $\O^+(L_Q)$ by Theorem \ref{Yoshikawa}.
 This has zeros on Heegner divisors defined by $(-2)$-vectors only, i.e.,
 \[\div(\Psi_{L_Q})=\sum_{\Delta_{L_Q}/\{\pm 1\}}\H(r).\]
Let $\Xi\defeq\{r\in L\mid\ \l r,r\r=-1\}.$

First, we consider the case of  $F\neq\Q(\sqrt{-1})$ and $\Q(\sqrt{-3})$.
We have
\[\div(\iota^{\star}\Psi_{L_Q})=\sum_{r\in\Xi/\{\pm 1\}}H(r).\]
From assumption, for any primitive $r\in L$ with $\l r,r\r=-1$, we have $2\l\ell,r\r\in\OO_F$, where $\ell$ is  any element of $L$.
Hence, for such $r$, we have 
\[\frac{2\l\ell,r\r}{\l r,r\r}\in\OO_F,\] 
so the Heegner divisors $H(r)$ appearing in $\div(\iota^{\star}\Psi_{L_Q})$ is contained in the ramification divisors of $\pi_{\U(L)}\colon D
_L\to X_L$.
Therefore $\iota^{\star}\Psi_{L_Q}$ is a reflective modular form of weight $w(\iota^{\star}\Psi_{L_Q})\geq 12$ on $D_L$, and its vanishing order on ramification divisors is at most 1.

Now, let $F=\Q(\sqrt{-1})$ or $\Q(\sqrt{-3})$.
In this case, from a similar discussion to the one of  $F\neq\Q(\sqrt{-1}), \Q(\sqrt{-3})$, it follows that $\iota^{\star}\Psi_{L_Q}$ is a reflective modular form.
Hence, we need to consider its weight and vanishing orders.
By Lemma \ref{special2} (2), we have
\[\div(\iota^{\star}\Psi_{L_Q})=\sum_{r\in\Xi/\{\pm 1\}}H(r)=
\begin{cases}
\displaystyle{\sum_{\Xi/\OO^{\times}_{\Q(\sqrt{-1})}}}2H(r)&(F=\Q(\sqrt{-1})),\\
\displaystyle{\sum_{\Xi/\OO^{\times}_{\Q(\sqrt{-3})}}}3H(r)&(F=\Q(\sqrt{-3})).
\end{cases}
\]
On the other hand, in each case, the assumption leads to the weight $w(\iota^{\star}\Psi_{L_Q})\geq 28$.

Consequently, we can easily check that $\iota^{\star}\Psi_{L_Q}$ and $L$ satisfy the condition in Theorem \ref{uniruledness}, so conclude that $X_L$ is uniruled.
\end{proof}
\begin{rem}
\begin{enumerate}
    \item The only case listed in Table \ref{table:lattices_and_invariants} to which Theorem \ref{Kodaira_dim_app} does not apply is $M=\mathbb{U}\oplus \mathbb{U}(2)\oplus \mathbb{E}_8(-2)$.
    \item In Appendix \ref{example}, we give examples of Hermitian lattices  $(L,\l\ ,\ \r)$ satisfying the conditions of Theorem \ref{Kodaira_dim_app}.
  There, we explain what happens in the $\delta=1$ case.
  For an even $2$-elementary quadratic lattice $M$ of signature $(2,10)$ with $\delta(M)=1$, Yoshikawa constructed a reflective modular form $\Psi_M$ of weight $4g$, where $g\defeq 2^{(12-\ell(M))/2}+1$; see \cite[Theorem 8.1]{YoshikawaII}.
  The multiplicity of the divisors of this function is at most $g$.
  In particular, $\Psi_M$ is not a  strongly reflective modular form.
  Moreover, $\Psi_M$ does not  satisfy the assumption in Theorem \ref{uniruledness}, because $4g/g=4$ is not greater than $5$.
  Hence, we cannot use $\Psi_M$ to study the uniruledness of  ball quotients.
  \end{enumerate}
\end{rem}

\begin{thm}
\label{Kodaira_dim_app2}
    Assume that
  \begin{enumerate}
      \item $L_Q\cong\mathbb{U}\oplus\mathbb{U}(2)\oplus\mathbb{E}_8(-2)$.
      \item  $\Div(r)\subset\OO_F$ for any primitive $r\in L$ with $\l r,r\r=-2$ and $\Tr_{F/\Q}(\Div(r))\subset 2\Z$.
  \end{enumerate}
  Then  $X_L$ is uniruled.
\end{thm}
\begin{proof}
By Theorem \ref{enriques_reflective}, we have a strongly reflective $\Phi_{124}$ of weight 124 on $\D_N$ satisfying 
\[\div(\Phi_{124})=\sum_{\substack{r\in N/\{\pm 1\}\ \mathrm{s.t.}\ (r,r)=-4 \\ 2|(\ell,r)\ \mathrm{for\ any\ }\ell\in N}}\H(r).\]
Similar to the proof of Theorem \ref{Kodaira_dim_app}, the modular form $\iota^{\star}\Phi_{124}$ on $D_L$ can be shown to be reflective and weight $w(\iota^{\star}\Phi_{124})=124$.
For each $F$, the vanishing order of $\iota^{\star}(\Phi_{124})$ along the ramification divisors is 3, so by  Theorem \ref{uniruledness}, $X_L$ is uniruled. 
\end{proof}

\begin{thm}
\label{Kodaira_dim_app3}
    Assume that
  \begin{enumerate}
      \item $L_Q\cong\mathbb{U}\oplus\mathbb{U}\oplus\mathbb{D}_6$.
      \item  $2\cdot\Div(r)\subset\OO_F$ for any primitive $r\in L$ with $\l r,r\r=-1$.
  \end{enumerate}
  Then $X_L$ is uniruled.
\end{thm}
\begin{proof}
By Theorem \ref{Yoshikawa2}, we have a strongly reflective modular form $\Psi_{L_Q}$ of weight 102 on $\D_{L_Q}$.
 This has zeros on Heegner divisors defined by $(-2)$-vectors only, i.e.,
 \[\div(\Psi_{L_Q})=\sum_{\Delta_{L_Q}/\{\pm 1\}}\H(r).\]
Similar to the proof of Theorem \ref{Kodaira_dim_app}, the modular form $\iota^{\star}\Psi_{L_Q}$ on $D_L$ can be shown to be reflective and weight $w(\iota^{\star}\Psi_{L_Q})=102$.
For each $F$, the vanishing order of $\iota^{\star}\Phi_{124}$ along the ramification divisors is 3, so by  Theorem \ref{uniruledness}, $X_L$ is uniruled. 
\end{proof}
\begin{thm}
\label{Kodaira_dim_app4}
  Assume that
  \begin{enumerate}
      \item $L_Q\cong\mathbb{U}\oplus\mathbb{U}\oplus\mathbb{D}_4$, $\mathbb{U}\oplus\mathbb{U}(2)\oplus\mathbb{D}_4$ or $\mathbb{U}(2)\oplus\mathbb{U}(2)\oplus\mathbb{D}_4$.
      \item  $2\cdot\Div(r)\subset\OO_F$ for any primitive $r\in L$ with $\l r,r\r=-1$.
  \end{enumerate}
  Then $X_L$ is uniruled.
\end{thm}
\begin{proof}
This can be proved in the same way as Theorem \ref{Kodaira_dim_app3} by using Theorem   \ref{Yoshikawa2}.
\end{proof}

\appendix

\section{Reflective modular forms on $\D_{L_Q}$}
\label{Section:Reflective modular forms}
Here, let us introduce reflective modular forms and recall some examples.
Note that the theorems appearing in this section are nothing new; they were proved by Gritsenko and  Hulek \cite{GH}, and  Yoshikawa \cite{YoshikawaII}. 

For a quadratic lattice $M$ over $\Z$ of signature $(2,m)$, let $\mathbf{O}(M)$ be the orthogonal groups scheme over $\Z$ asociated with $M$.
We denote by $\O^+(M)$ the index 2 subgroup of the integral orthogonal group $\mathbf{O}(M)(\Z)$ preserving $\D_M$.
Let $\M_k(\Gamma,\chi)$ and $\S_k(\Gamma,\chi)$ be the spaces of modular forms and cusp forms of weight $k$ on $\D_M$ with respect to $\Gamma$ and a character $\chi:\Gamma\to\C^{\times}$, where  $\Gamma\subset\O^+(M)$ is a finite index subgroup.

In the following, we assume $m>2$.
Then, in the same way as Proposition \ref{ramification_divisors}, by \cite[Corollary 2.13]{GHS}, the ramification divisors of  \[\pi_{\Gamma}\colon\D_{M}\to\Gamma\backslash\D_{M}\]
are equal to 
\[\bigcup_{\substack{r\in M/\{\pm 1\},\ r\ \mathrm{is\ primitive}\\ \sigma_r\in\Gamma \ \mathrm{or}\  -\sigma_r\in\Gamma}}\H(r).\]
Recall that a modular form $F_k\in\M_k(\Gamma,\chi)$ is called {\em reflective} if
  \[\Supp(\div F_k)\subset \bigcup_{\substack{r\in M/\{\pm 1\},\ r\ \mathrm{is\ primitive}\\ \sigma_r\in\Gamma \ \mathrm{or}\  -\sigma_r\in\Gamma}}\H(r),\]
  set-theoretically.
  We call a reflective modular form $F_k$ {\em strongly reflective} if the multiplicity of any irreducible component of $\div(F_k)$ is 1.
  Reflective modular forms were introduced by Gritsenko and Nikulin \cite{GN} and studied in \cite{Reflective} for details.

Now, let us give some examples of reflective modular forms that will be used in the proof of the main theorem of this paper.
First, we introduce some invariants of quadratic lattices according to \cite{YoshikawaII}.
Let $r(M)$ be the rank of $M$, and $M^{\vee}$ be its dual lattice, i.e., \[M^{\vee}\defeq\{v\in M\otimes_{\Z}\Q\mid (v,w)\in\Z\ \mathrm{for\ any\ }w\in M\}.\]
We set
\[\Delta_M\defeq\{v\in M\mid (v,v)=-2\}\]
and define
\[\Delta_M'\defeq\left\{v\in \Delta_M\mid \frac{v}{2}\not\in M^{\vee}\right\},\]
\[\Delta_M''\defeq\left\{v\in \Delta_M\mid \frac{v}{2}\in M^{\vee}\right\},\]
so that $\Delta_M=\Delta_M'\sqcup\Delta_M''$.
We call $M$ {\em even} if $(v,v)\in 2\Z$ for any $v\in M$.
We say $M$ is {\em $2$-elementary} if there exists a non-negative integer $\ell(M)$ such that
\[M^{\vee}/M\cong (\Z/2\Z)^{\ell(M)}.\]
Then, $r(M)\geq\ell(M)$ and $r(M)\equiv\ell(M)\bmod 2$.
We also define
\[\delta(M)\defeq\begin{cases}0&((v,v)\in\Z\ \mathrm{for\ any}\ v\in M^{\vee})\\
    1&((v,v)\not\in\Z\ \mathrm{for\ some}\ v\in M^{\vee}).
    \end{cases}\]
By \cite[Theorem 3.6.2]{Nikulin},  the triplet $(\mathrm{sign}(M),\ell(M),\delta(M))$ determines the isometry class of an indefinite even $2$-elementary lattice $M$.

In the following, we assume that the signature of $M$ is $(2,10)$ or $(2,8)$, and $M$ is an even $2$-elementary lattice.
In \cite{YoshikawaII}, Yoshikawa  constructed a $\C[M^{\vee}/M]$-valued holomorphic function $F_M$ on the complex upper half plane and calculated its Borcherds lift $\Psi_M$.
We review Yoshikawa's result for a special case.
\begin{thm}[Yoshikawa]
  \label{Yoshikawa}
  Let $M$ be an even $2$-elementary  quadratic lattice over $\Z$ of signature $(2,10)$ and  $\delta(M)=0$.
  Then, $\Psi_M$ is a modular form on $\D_M$ for $\O^+(M)$ satisfying
  \[\div(\Psi_M)=\sum_{\lambda\in\Delta'_M/\{\pm 1\}}\H(\lambda).\]
  Moreover, the weight $w(M)$ of $\Psi_M$ is given by
  \[w(M)=4(2^{(12-\ell(M))/2}+1)-8=2^{(16-\ell(M))/2}-4.\]
  In particular, since for any primitive $(-2)$-vector $r\in M$, the reflection $\sigma_r$ is always an element of  $\O^{+}(M)$, the reflective modular form $\Psi_M$ is strongly reflective.
\end{thm}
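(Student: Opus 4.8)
The plan is to realize $\Psi_M$ as a Borcherds automorphic product. For a lattice $M$ of signature $(2,10)$ the singular theta lift takes as input a weakly holomorphic vector-valued modular form
\[
F_M \;=\; \sum_{\gamma\in M^{\vee}/M} F_{M,\gamma}\, e_{\gamma},
\qquad
F_{M,\gamma} = \sum_{n} c_{\gamma}(n)\, q^{n},
\]
of weight $1-10/2=-4$ for the Weil representation $\rho_M$ attached to the discriminant form $M^{\vee}/M$, and returns a meromorphic modular form $\Psi_M$ on $\D_M$ for $\O^{+}(M)$ (up to a character) whose weight equals $c_0(0)/2$ and whose divisor is a combination of the special divisors $\H(\lambda)$ with multiplicities read off from the Fourier coefficients of $F_M$. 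Thus the first step is to produce the correct input: following Yoshikawa, I would construct $F_M$ explicitly from theta functions of the $2$-elementary discriminant form together with $\eta$-products, arranged so that its principal part consists only of $c_0(-1)\,q^{-1}e_0=q^{-1}e_0$ and no other negative-index terms. The hypotheses that $M$ is even $2$-elementary with $M^{\vee}/M\cong(\Z/2\Z)^{\ell(M)}$ pin down $\rho_M$ and make this construction effective.

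For the divisor, the key simplification comes from $\delta(M)=0$, which forces $(v,v)\in\Z$ for every $v\in M^{\vee}$. A root $\lambda\in\Delta_M''$ would yield $\lambda/2\in M^{\vee}$ with $(\lambda/2,\lambda/2)=-1/2\notin\Z$, so $\Delta_M''=\emptyset$ and $\Delta_M=\Delta_M'$. Hence every $(-2)$-vector $\lambda\in M$ is primitive in $M^{\vee}$ and maps to $0\in M^{\vee}/M$, so it contributes through the index $(0,-1)$ only. By the Borcherds divisor formula the multiplicity of $\Psi_M$ along $\H(\lambda)$ is $\sum_{k\ge 1} c_{0}(-k^2)=c_0(-1)$, since $c_0$ has no pole beyond $q^{-1}$; with $c_0(-1)=1$ and no half-integral negative terms this gives
\[
\div(\Psi_M)=\sum_{\lambda\in\Delta_M'/\pm 1}\H(\lambda)
\]
with every component of multiplicity exactly $1$.

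Finally, the weight is $c_0(0)/2$, so it remains to evaluate the constant term of the trivial component of $F_M$. This is the main obstacle: it requires the explicit shape of $F_M$ and a careful count of the contributions of the $2^{\ell(M)}$ classes of $M^{\vee}/M$, and it is precisely Yoshikawa's computation that yields $c_0(0)/2=2^{(16-\ell(M))/2}-4$, equivalently $4\bigl(2^{(12-\ell(M))/2}+1\bigr)-8$. Strong reflectivity is then immediate: for any $r\in M$ with $(r,r)=-2$ one has $\sigma_r(\ell)=\ell+(\ell,r)\,r\in M$, so $\sigma_r\in\O(M)(\Z)$, and since $(r,r)<0$ the reflection preserves the positive-definite $2$-plane and hence lies in $\O^{+}(M)$; therefore $\div(\Psi_M)$ is supported on the reflective locus, and as all multiplicities equal $1$, $\Psi_M$ is strongly reflective.
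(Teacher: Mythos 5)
The paper's own ``proof'' of this theorem is nothing but a citation of Yoshikawa's Theorem 8.1, and your proposal is a faithful outline of exactly the argument behind that citation: realize $\Psi_M$ as the Borcherds lift of a weight $-4$ vector-valued form with principal part $q^{-1}e_0$, read off the divisor multiplicities and the weight $c_0(0)/2$ from the Fourier coefficients, and use $\delta(M)=0$ to force $\Delta_M''=\emptyset$ so that only simple zeros along the $(-2)$-reflection divisors occur. Since you correctly identify, and explicitly defer to Yoshikawa, the two computational cores (the construction of $F_M$ from theta series and $\eta$-products, and the evaluation of $c_0(0)$ giving $w(M)=2^{(16-\ell(M))/2}-4$), your argument ultimately rests on the same external input as the paper's, and is consistent with it.
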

\begin{proof}
  See \cite[Theorem 8.1]{YoshikawaII}.
\end{proof}
\begin{thm}[Yoshikawa]
  \label{Yoshikawa2}
  Let $M\defeq\mathbb{U}\oplus\mathbb{U}\oplus\mathbb{D}_6$ be a quadratic lattice over $\Z$ of signature $(2,8)$.
  Then, $\Psi_M$ is a modular form on $\D_M$ for $\O^+(M)$ satisfying
  \[\div(\Psi_M)=\sum_{\lambda\in\Delta'_M/\{\pm 1\}}\H(\lambda).\]
  Moreover, the weight $w(M)$ of $\Psi_M$ is 102 and $\Psi_M$ is strongly reflective.
\end{thm}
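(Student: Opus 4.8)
The plan is to exhibit $\Psi_M$ as a Borcherds product and to read off its weight and divisor from the principal part of the input form, exactly as in the $(2,10)$ case of Theorem \ref{Yoshikawa}. I would begin by recording the discriminant form of $M=\mathbb{U}\oplus\mathbb{U}\oplus\mathbb{D}_6$ (the $\mathbb{D}_6$-summand being taken negative definite, so that the signature is $(2,8)$). Since $\mathbb{U}$ is unimodular, $M^{\vee}/M\cong\mathbb{D}_6^{\vee}/\mathbb{D}_6\cong(\Z/2\Z)^2$, whence $\ell(M)=2$; and the two spinor classes of $\mathbb{D}_6$ carry the half-integral norm $-3/2$, so $\delta(M)=1$. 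In particular $M$ is an even $2$-elementary lattice of signature $(2,8)$ lying outside the range of Theorem \ref{Yoshikawa} ($\delta=0$), so a separate input form is genuinely needed. A useful preliminary observation is that $\Delta_M''=\varnothing$ for this $M$: a vector $v\in\Delta_M''$ would give $w\defeq v/2\in M^{\vee}$ with $(w,w)=-1/2$, i.e. the class $w+M$ would have discriminant-form value $\equiv 3/2\bmod 2\Z$, which is not among the values $\{0,1,1/2\}$ represented by the discriminant form. Hence $\Delta_M'=\Delta_M$ and the asserted divisor is simply the full reduced root divisor.

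Next I would invoke the multiplicative Borcherds lift for lattices of signature $(2,m)$: a weakly holomorphic $\C[M^{\vee}/M]$-valued modular form $f=(f_{\gamma})_{\gamma}$ of weight $1-m/2$ (here $m=8$, so weight $-3$) with integral principal part lifts to a modular form $\Psi_f$ on $\D_M$ for $\O^+(M)$ (up to a character), of weight $\tfrac12 c(0,0)$, whose divisor is the Heegner divisor determined by the coefficients $c(\gamma,n)$ with $n<0$. Yoshikawa constructs the relevant input $F_M$ explicitly from the discriminant form of $M$, and the claim is that $\Psi_M=\Psi_{F_M}$; this is the analogue for $(2,8)$ of the construction carried out in \cite{YoshikawaII} for signature $(2,10)$.

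The divisor and strong reflectivity then follow from the principal part of $F_M$. Every $(-2)$-vector $\lambda\in\Delta_M=\Delta_M'$ lies in $M$, hence in the discriminant class $\gamma=0$ with $(\lambda,\lambda)/2=-1$, and, since $\Delta_M''=\varnothing$ (so $\lambda/2\notin M^{\vee}$), $\lambda$ is primitive in $M^{\vee}$; consequently its multiplicity in $\div(\Psi_M)$ equals the single coefficient $c(0,-1)$. Yoshikawa's $F_M$ has $c(0,-1)=1$, which gives each $\H(\lambda)$ a simple zero, so $\div(\Psi_M)=\sum_{\lambda\in\Delta_M'/\pm1}\H(\lambda)$ with all multiplicities equal to $1$; since each $\lambda$ is a root, $\sigma_{\lambda}\in\O^+(M)$, so the support is reflective, and multiplicity one is precisely strong reflectivity. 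Finally the weight is $\tfrac12 c(0,0)$, where the constant term $c(0,0)$ is pinned down by the principal part through its pairing with the weight-$5$ Eisenstein series for the dual Weil representation; evaluating this gives $c(0,0)=204$, i.e. $w(M)=102$.

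The main obstacle is this last arithmetic input: producing Yoshikawa's explicit $F_M$ and computing its Fourier coefficients, in particular verifying that the principal part is carried by $c(0,-1)=1$ alone and evaluating the constant term $c(0,0)=204$ that fixes the weight. These computations are the substance of \cite{YoshikawaII}, and for the final statement I would simply cite the relevant theorem there rather than reproduce them.
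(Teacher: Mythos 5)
Your proposal is correct and ultimately rests on the same foundation as the paper, whose entire proof of this statement is the citation of \cite[Theorem 8.1]{YoshikawaII}; your added sketch of the Borcherds-product mechanism (discriminant form of $\mathbb{U}\oplus\mathbb{U}\oplus\mathbb{D}_6$, the observation that $\Delta_M''=\varnothing$, weight $\tfrac12 c(0,0)=102$) is accurate and consistent with Yoshikawa's construction. Since you defer the substantive computations to the same reference the paper cites, the two arguments are essentially identical.
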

\begin{proof}
  See \cite[Theorem 8.1]{YoshikawaII}.
\end{proof}

\begin{rem}
\begin{enumerate}
    \item Let us list $(M,( \ ,\ ))$ satisfying the condition of Theorem \ref{Yoshikawa}.
Under the assumption of the existence of a primitive embedding into the K3 lattice, these quadratic lattices are classified in \cite{YoshikawaIV}.
Table \ref{table:Lattices_and_weights_of_modular_forms} lists the even $2$-elementary  quadratic lattices of signature $(2,10)$ satisfying  $\delta(M)=0$  which have primitive embeddings into the K3 lattice.
\vspace{0.2in}
\begin{center}
\begin{table}[h]
\caption{Lattices and weights of modular forms}
\label{table:Lattices_and_weights_of_modular_forms}
\renewcommand{\arraystretch}{1.5}
    \begin{tabular}{|c||c|c|} \hline
     Quadratic lattices $M$&$\ell(M)$&$w(M)$\\ \hline
      $\mathbb{U}\oplus \mathbb{U}(2)\oplus \mathbb{E}_8(-2)$&$10$&4\\ \hline
      $\mathbb{U}\oplus \mathbb{U}\oplus \mathbb{E}_8(-2)$&$8$&12\\\hline
      $\mathbb{U}\oplus \mathbb{U}(2)\oplus \mathbb{D}_4(-1)\oplus \mathbb{D}_4(-1)$&$6$&28\\\hline
      $\mathbb{U}\oplus \mathbb{U}\oplus \mathbb{D}_4(-1)\oplus \mathbb{D}_4(-1)$&$4$&60\\\hline
      $\mathbb{U}\oplus \mathbb{U}\oplus \mathbb{D}_8(-1)$&$2$&124\\\hline
      $\mathbb{U}\oplus \mathbb{U}\oplus \mathbb{E}_8(-1)$&$0$&252\\\hline
    \end{tabular}
  \renewcommand{\arraystretch}{1.0}
  \end{table}
  \end{center}
\vspace{0.2in}

\item  Even unimodular lattices over $\Z$ of signature $(2,10)$ are $2$-elementary and $\delta=0$, hence they satisfy the assumption in Theorem \ref{Yoshikawa}.
\end{enumerate}
\end{rem}

There exists a strongly reflective modular form on $\D_M$ different from Yoshikawa's one, where $M\defeq\mathbb{U}\oplus \mathbb{U}(2)\oplus \mathbb{E}_8(-2)$.
Let $(M\defeq \mathbb{U}\oplus \mathbb{U}(2)\oplus \mathbb{E}_8(-2),(\ ,\ ))$ be an even quadratic lattice of signature $(2,10)$.
The quadratic lattice $M$ is related to the moduli space of polarized Enriques surfaces.
Gritsenko and Hulek \cite{GH} constructed a strongly reflective modular form on $\D_M$.
\begin{thm}[Gritsenko and Hulek]
  \label{enriques_reflective}
  There exists a strongly reflective modular form  $\Phi_{124}\in \S_{124}(\O^+(M),\mu_2)$ of weight $124$ where $\mu_2$ is a binary character of $\O^+(M)$.
  This satisfies
  \[\div(\Phi_{124})=\bigcup_{\substack{r\in M/\{\pm 1\}\ \mathrm{s.t.}\ (r,r)=-4 \\ 2|(\ell,r)\ \mathrm{for\ any\ }\ell\in M}}\H(r)\subset \bigcup_{\substack{r\in M/\{\pm 1\}\ \mathrm{s.t.}\ (r,r)=-4 \\ 2|(\ell,r)\ \mathrm{for\ any\ }\ell\in M}}\H(r)\ \bigsqcup\bigcup_{\substack{r\in M/\{\pm 1\}\\ \mathrm{s.t.}\ (r,r)=-2}}\H(r),\]
  where the right-hand side is the ramification divisor of $\pi\colon\D_M\to\O^+(M)\backslash\D_M$
\end{thm}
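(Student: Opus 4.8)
The plan is to realise $\Phi_{124}$ as a Borcherds product, i.e.\ as the multiplicative lift of a weakly holomorphic vector-valued modular form on the metaplectic group $\Mp_2(\Z)$, and then to read off its divisor, weight and character from Borcherds' theorem. First I would record the discriminant form of $N=\mathbb{U}\oplus\mathbb{U}(2)\oplus\mathbb{E}_8(-2)$: as $N$ is even $2$-elementary with $\ell(N)=10$, one has $N^{\vee}/N\cong(\Z/2\Z)^{10}$ together with its induced $\Q/2\Z$-valued quadratic form. Let $\rho_N$ be the associated Weil representation of $\Mp_2(\Z)$ on the group algebra $\C[N^{\vee}/N]=\bigoplus_{\gamma}\C e_{\gamma}$. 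Since $N$ has signature $(2,10)$, Borcherds' theorem applies to an input $f=\sum_{\gamma}\sum_{n}c_{\gamma}(n)q^{n}e_{\gamma}$ of weight $1-10/2=-4$ for $\rho_N$ with integral principal part: it produces a meromorphic modular form $\Psi_f$ on $\D_N$ of weight $c_{0}(0)/2$ whose divisor is
\[
\div(\Psi_f)=\sum_{\gamma\in N^{\vee}/N}\ \sum_{n<0}c_{\gamma}(n)\sum_{\substack{\lambda\in\gamma+N\\ \lambda^{2}=2n}}\H(\lambda),
\]
a linear combination of the rational quadratic (Heegner) divisors.

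Next I would pin down the principal part of $f$ forced by the target divisor. The divisors in the statement are attached to the reflective $(-4)$-vectors $r$ with $r/2\in N^{\vee}$ (equivalently $2\mid(\ell,r)$ for all $\ell\in N$); writing $\lambda=r/2\in N^{\vee}$ one has $\lambda^{2}=-1$, so these correspond to the index $(\gamma,n)=([r/2],-1/2)$, where $\gamma$ is represented by a vector $v\in N^{\vee}$ with $(v,v)=-1$. I would therefore seek $f$ whose principal part is exactly $\sum_{\gamma}e_{\gamma}q^{-1/2}$, the sum over those cosets $\gamma$, with $c_{0}(-1)=0$ so that no $(-2)$-divisor $\H(r)$, $(r,r)=-2$, enters, and with every surviving coefficient equal to $1$, which forces strong reflectivity. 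Granting such an $f$, the constant term is then determined by the structure of the relevant space of vector-valued forms, and the value $c_0(0)=248$ gives the weight $c_0(0)/2=124$; the binary character $\mu_2$ is the character of $\O^{+}(N)$ through which it acts on the $e_{\gamma}$ under $\rho_N$.

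The genuinely non-routine step, and the one I expect to be the \emph{main obstacle}, is the existence of an input $f$ with precisely this principal part. Two routes are available. The explicit route builds $f$ from the Jacobi theta functions of $\mathbb{E}_8(-2)$ together with scalar $\eta$-quotients adapted to the $\mathbb{U}(2)$-summand, using the explicit isomorphism class of the discriminant form. The obstruction-theoretic route invokes Borcherds' duality: a formal principal part occurs in some weakly holomorphic form of weight $-4$ for $\rho_N$ if and only if it pairs to zero with every cusp form of the dual weight $1+10/2=6$ for the dual Weil representation, so the problem reduces to a finite linear-algebra check over the (computable) space of vector-valued cusp forms of weight $6$. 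Once $f$ is produced, the remaining claims are formal: non-negativity of all principal-part coefficients shows that $\Psi_f=\Phi_{124}$ is holomorphic (the product only acquires zeros), the divisor formula above gives $\div(\Phi_{124})=\sum_{(r,r)=-4,\,r/2\in N^{\vee}}\H(r)$ with multiplicity one inside the ramification divisor of $\pi\colon\D_N\to\O^{+}(N)\backslash\D_N$, and the cusp-form property $\Phi_{124}\in S_{124}(\O^{+}(N),\mu_2)$ follows from the positivity of the order of vanishing of the product expansion at each $0$-dimensional cusp.
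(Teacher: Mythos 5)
The paper does not actually prove this statement: it is quoted from Gritsenko--Hulek and the ``proof'' consists of the single citation \cite[Lemma 5.4]{GH}. Your strategy --- realising $\Phi_{124}$ as the Borcherds lift of a weakly holomorphic form of weight $-4$ for the Weil representation of $N^{\vee}/N\cong(\Z/2\Z)^{10}$ --- is the standard and indeed the intended route, and your bookkeeping is correct: since $N$ is $2$-elementary one has $2N^{\vee}\subset N$, so the norm $-1$ vectors $\lambda\in N^{\vee}$ are exactly the $r/2$ with $r\in N$, $(r,r)=-4$ and $2\mid(\ell,r)$ for all $\ell\in N$, and prescribing the principal part $\sum_{\gamma}e_{\gamma}q^{-1/2}$ over those cosets while killing $c_{0}(-1)$ targets precisely the divisor in the statement with multiplicity one.

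The genuine gap is the one you yourself flag, and it is not a small one: the existence of an input $f$ with exactly that principal part, together with the evaluation $c_{0}(0)=248$, \emph{is} the content of the cited lemma, and neither of your two proposed routes is carried out. The obstruction-theoretic route in particular only guarantees \emph{some} weakly holomorphic form with the given principal part after a nontrivial pairing computation against the space of cusp forms of weight $6$ for the dual Weil representation, and it does not by itself determine $c_{0}(0)$; the weight $124$ therefore remains unverified in your sketch. Two further points are asserted rather than proved. First, that $\Phi_{124}$ is a \emph{cusp} form does not follow from holomorphy of the product alone; one must check strict positivity of the vanishing order at every $0$-dimensional cusp (equivalently, a condition on the Weyl vectors for all Weyl chambers and all rational isotropic lines), which is a separate finite verification. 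Second, the inclusion of $\div(\Phi_{124})$ into the ramification divisor of $\pi\colon\D_N\to\O^{+}(N)\backslash\D_N$ requires observing that for such $r$ the reflection $\sigma_{r}$ is integral (again because $2\mid(\ell,r)$ for all $\ell$), hence lies in $\O^{+}(N)$ up to sign; you use this implicitly but should state it. As it stands your text is a correct and well-organised plan that reduces the theorem to the computations actually performed in \cite[Lemma 5.4]{GH}, rather than an independent proof.
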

\begin{proof}
  See \cite[Lemma 5.4]{GH}.
\end{proof}
\begin{thm}[Yoshikawa]
  \label{Yoshikawa3}
  Let $M\defeq\mathbb{U}\oplus\mathbb{U}\oplus\mathbb{D}_4$, $\mathbb{U}\oplus\mathbb{U}(2)\oplus\mathbb{D}_4$ or $\mathbb{U}(2)\oplus\mathbb{U}(2)\oplus\mathbb{D}_4$ be a quadratic lattice over $\Z$ of signature $(2,6)$.
  Then, $\Psi_M$ is a strongly reflective modular form on $\D_M$ for $\O^+(M)$ satisfying
  \[\div(\Psi_M)=\sum_{\lambda\in\Delta'_M/\{\pm 1\}}\H(\lambda).\]
  Moreover, the weight $w(M)$ of $\Psi_M$ is as follows.
  \[w(M)=8(2^{\frac{8-\ell(M)}{2}}+1).\]
\end{thm}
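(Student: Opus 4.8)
The plan is to reduce the statement to Yoshikawa's general construction in \cite[Theorem 8.1]{YoshikawaII}, exactly as in the proofs of Theorem \ref{Yoshikawa} and Theorem \ref{Yoshikawa2}, once I have checked that each of the three lattices satisfies its hypotheses. First I would record that each $M$ is even, 2-elementary, and of signature $(2,6)$ (so $r(M)=8$; here $\mathbb{D}_4$ is taken negative definite, so that $(1,1)+(1,1)+(0,4)=(2,6)$), and then compute the invariants $(\ell(M),\delta(M))$. Since $\mathbb{U}$ is unimodular while $\mathbb{U}(2)$ and $\mathbb{D}_4$ each have discriminant group $(\Z/2\Z)^2$ on which the discriminant quadratic form takes integral values, one finds $(\ell(M),\delta(M))=(2,0),(4,0),(6,0)$ for the three lattices, respectively. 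By Nikulin's theorem \cite[Theorem 3.6.2]{Nikulin} these invariants determine the isometry class, so the three lattices are precisely the even 2-elementary lattices of signature $(2,6)$ with $\delta=0$ and $\ell\in\{2,4,6\}$.

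Next I would invoke Yoshikawa's construction: to such an $M$ he attaches a $\C[M^{\vee}/M]$-valued modular form $F_M$ on the upper half plane whose Borcherds lift is the automorphic form $\Psi_M$ on $\D_M$ for $\O^+(M)$. The support of $\div(\Psi_M)$ is read off from the principal part of $F_M$ via the Borcherds product expansion, giving $\div(\Psi_M)=\sum_{\lambda\in\Delta'_M/\pm 1}\H(\lambda)$, while the weight is half the zeroth Fourier coefficient of the trivial component of $F_M$. Specializing Yoshikawa's weight formula to signature $(2,6)$ then yields $w(M)=8\bigl(2^{(8-\ell(M))/2}+1\bigr)$, that is $72$, $40$, $24$ for $\ell(M)=2,4,6$.

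For strong reflectivity I would argue as in the final sentence of Theorem \ref{Yoshikawa}: granting the divisor formula of the previous step, by Lemma \ref{special3} distinct classes in $\Delta'_M/\pm 1$ determine distinct hyperplanes $\H(\lambda)$, so $\div(\Psi_M)$ is a reduced divisor; and every $\lambda\in\Delta'_M$ is a $(-2)$-vector, for which the reflection $\sigma_{\lambda}$ always lies in $\O^+(M)$, so each $\H(\lambda)$ is a ramification divisor of $\pi\colon\D_M\to\O^+(M)\backslash\D_M$. Hence $\Psi_M$ is reflective with every multiplicity equal to $1$, i.e.\ strongly reflective.

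The main obstacle is not the support of the divisor but the two quantitative points feeding into the previous paragraphs. I expect the delicate step to be establishing that the multiplicity of each $\H(\lambda)$ in the Borcherds product is exactly $1$: this is the coefficient $c_{\lambda}(-1)$ of $F_M$, and the remark preceding Theorem \ref{Yoshikawa} shows that for most signature $(2,8)$ lattices it exceeds $1$, so multiplicity one must be checked honestly for each of the three lattices rather than assumed. Equally, confirming that Yoshikawa's general weight formula specializes to $8(2^{(8-\ell)/2}+1)$ in signature $(2,6)$ requires the explicit evaluation of the zeroth coefficient $c_0(0)$ of $F_M$. Both reduce to computing a handful of Fourier coefficients of $F_M$ for each lattice; everything else is bookkeeping with Nikulin's classification and the reflection property of $(-2)$-vectors.
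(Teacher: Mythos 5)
Your proposal is correct and follows essentially the same route as the paper, whose entire proof is the citation ``See \cite[Theorem 8.1]{YoshikawaII}'': you simply make explicit the verification that each of the three lattices is even, 2-elementary of signature $(2,6)$ with $\delta(M)=0$ and $\ell(M)\in\{2,4,6\}$, and then specialize Yoshikawa's divisor and weight formulas (giving weights $72$, $40$, $24$). Your closing caveats about checking multiplicity one and the zeroth Fourier coefficient are exactly the content delegated to Yoshikawa's theorem, so nothing further is needed.
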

\begin{proof}
  See \cite[Theorem 8.1]{YoshikawaII}.
\end{proof}
Table \ref{table:Lattices_and_weights_of_modular_forms_II} is a list of the triplets $(M,\ell(M),w(M))$ satisfying the condition of Theorem \ref{Yoshikawa}.
\vspace{0.2in}
\begin{center}
\renewcommand{\arraystretch}{1.5}
\begin{table}[h]
\caption{Lattices and weights of modular forms II}
\label{table:Lattices_and_weights_of_modular_forms_II}
    \begin{tabular}{|c||c|c|} \hline
     Quadratic lattices $M$&$\ell(M)$&$w(M)$\\ \hline
      $\mathbb{U}(2)\oplus \mathbb{U}(2)\oplus \mathbb{D}_4$&$6$&24\\ \hline
      $\mathbb{U}\oplus \mathbb{U}(2)\oplus \mathbb{D}_4$&$4$&40\\\hline
      $\mathbb{U}\oplus \mathbb{U}\oplus \mathbb{D}_4$&$2$&72\\\hline
    \end{tabular}
    \end{table}
  \renewcommand{\arraystretch}{1.0}
  \end{center}
\vspace{0.2in}

\section{Examples of Hermitian lattices}
\label{example}
In this appendix, we list some Hermitian lattices that satisfy the conditions of the theorems in Section \ref{Section:Kodaira}.
We start by defining some special Hermitian lattices.

Let
$L_{\mathbb{U}\oplus\mathbb{U}}$
be the Hermitian lattice defined by
\[\gamma
\begin{pmatrix}
0 & 1 \\
-1 & 0 \\
\end{pmatrix}\]
where
\[\gamma = \begin{cases}
  \frac{1}{2\sqrt{d}}&(d\equiv 2,3 \bmod 4) \\
  \frac{1}{\sqrt{d}}&(d\equiv 1 \bmod 4).
\end{cases}\]
Then, the associated quadratic lattice
$(L_{\mathbb{U}\oplus\mathbb{U}})_Q$
is isometric to $\mathbb{U}\oplus\mathbb{U}$.
(See \cite[Example 2.1]{Hofmann}.)

Let $L_{-1,\mathbb{U}\oplus\mathbb{U}(2)}$ be the Hermitian lattice over $\OO_{\Q(\sqrt{-1})}$ defined by
\[
\begin{pmatrix}
0 & \frac{1+\sqrt{-1}}{2} \\
\frac{1-\sqrt{-1}}{2} & 0
\end{pmatrix}
\]
Then, the associated quadratic lattice $(L_{-1,\mathbb{U}\oplus\mathbb{U}(2)})_Q$
is defined by
\[
\begin{pmatrix}
0 & 0 & 1 & 1 \\
0 & 0 & -1 & 1 \\
1 & -1 & 0 & 0 \\
1 & 1 & 0 & 0
\end{pmatrix}.
\]
It is isometric to $\mathbb{U}\oplus\mathbb{U}(2)$.

Let $L_{-2,\mathbb{U}\oplus\mathbb{U}(2)}$ be the Hermitian lattice over $\OO_{\Q(\sqrt{-2})}$ defined by
\[
\begin{pmatrix}
0 & \frac{1}{2} \\
\frac{1 }{2} & 0
\end{pmatrix}
\]
Then. the associated quadratic lattice $(L_{-2,\mathbb{U}\oplus\mathbb{U}(2)})_Q$
is defined by
\[
\begin{pmatrix}
0 & 0 & 1 & 0 \\
0 & 0 & 0 & 2 \\
1 & 0 & 0 & 0 \\
0 & 2 & 0 & 0
\end{pmatrix}.
\]
It is isometric to $\mathbb{U}\oplus\mathbb{U}(2)$.

Coulageon-Nebe \cite{CN} constructed Hermitian lattices  $L_{\mathbb{D}_4}$ of rank 4 such that $(L_{\mathbb{D}_4})_Q\cong\mathbb{D}_4$ for $F=\Q(\sqrt{-1})$ and  $\Q(\sqrt{-2})$.
Let $L_{-1,\mathbb{D}_4}$ be the Hermitian lattice over $\OO_{\Q(\sqrt{-1})}$  defined by  \[\begin{pmatrix}
1 & \frac{1}{1+\sqrt{-1}} \\
\frac{1}{1-\sqrt{-1}} & 1 \\
\end{pmatrix}.
\]
Then, the associated quadratic lattice $(L_{-1,\mathbb{D}_4})_Q$ is defined by 
\[\begin{pmatrix}
2 & 0 & 1 & -1 \\
0 & 2 & 1 & 1 \\
1 & 1 & 2 & 0 \\
-1 & 1 & 0 & 2 \\
\end{pmatrix}.\]
It is isometric to $\mathbb{D}_4$.

Let $L_{-2,\mathbb{D}_4}$ be the Hermitian lattice over $\OO_{\Q(\sqrt{-2})}$  defined by  \[\begin{pmatrix}
1 & \frac{1+\sqrt{-2}}{2} \\
\frac{1-\sqrt{-2}}{2} & 1 \\
\end{pmatrix}.
\]
Then the associated quadratic lattice $(L_{-2,\mathbb{D}_4})_Q$ is defined by 
\[\begin{pmatrix}
2 & 0 & 1 & 2 \\
0 & 4 & -2 & 2 \\
1 & -2 & 2 & 0 \\
2 & 2 & 0 & 4 \\
\end{pmatrix}.\]
It is isometric to $\mathbb{D}_4$.

Let $L_{\mathbb{D}_6}$ be the Hermitian lattice over $\OO_{\Q(\sqrt{-1})}$  defined by  \[\frac{1}{2}\begin{pmatrix}
2 & 1+\sqrt{-1} &1\\
1-\sqrt{-1} & 2 &0\\
1&0&2\\
\end{pmatrix}.
\]
Then, the associated quadratic lattice $(L_{\mathbb{D}_6})_Q$ is defined by 
\[\begin{pmatrix}
2 & 0 & 1 & 1&1&0 \\
0 & 2 & -1 & 1&0&1 \\
1 & -1 & 2 & 0&0&0 \\
1 & 1 & 0 & 2 &0&0 \\
1 &0  &0  &0 &2&0\\
0 &1 &0  &0 &0&2\\
\end{pmatrix}.\]
It is isometric to $\mathbb{D}_6$.

Let $L_{\mathbb{D}_8}$ be the Hermitian lattice over $\OO_{\Q(\sqrt{-1})}$  defined by  \[\frac{1}{2}\begin{pmatrix}
2 & 1+\sqrt{-1} &0&0\\
1-\sqrt{-1} & 2 &1&0\\
0&1&2&1\\
0&0&1&2\\
\end{pmatrix}.
\]
Then, the associated quadratic lattice $(L_{\mathbb{D}_4})_Q$ is defined by 
\[\begin{pmatrix}
2 & 0 & 1 & 1&0&0&0&0 \\
0 & 2 & -1 & 1&0&0&0&0 \\
1 & -1 & 2 & 0&1&0&0&0 \\
1 & 1 & 0 & 2&0&1&0&0 \\
0&0&1&0&2&0&1&0\\
0&0&0&1&0&2&0&1\\
0&0&0&0&1&0&2&0\\
0&0&0&0&0&1&0&2\\
\end{pmatrix}.\]
It is isometric to $\mathbb{D}_8$.

Here, we give Hermitian lattices $(L_{d, \mathbb{E}_8}, A_d)$ over $\OO_{\Q(\sqrt{d})}$ of rank $(4, 0)$ whose associated quadratic lattices $B_d\cong\mathbb{E}_8$ are even unimodular for $d=-1, -2$.
 The matrix $A_{-1}$ is called Iyanaga’s matrix \cite{Iyanaga} and $A_{-2}$ is taken from \cite{Hentschel}. 
 In \cite{Classification}, Hentschel, Krieg and Nebe classified the Hermitian lattices over $\OO_F$ for the remaining imaginary quadratic fields of class number 1.

\begin{align*}
    A_{-1}&=\frac{1}{2}\begin{pmatrix}
2 & -\sqrt{-1} & -\sqrt{-1}&1\\
\sqrt{-1} & 2 & 1 & \sqrt{-1}\\
\sqrt{-1} & 1 & 2 & 1\\
1& -\sqrt{-1} & 1 & 2\\
\end{pmatrix}\\
B_{-1}&=\begin{pmatrix}
2 & 0 & 0 & -1& 0 & -1& 1 & 0 \\
0 & 2 & 1 & 0 & 1 & 0 & 0 & 1 \\
0 & 1 & 2 & 0 & 1 & 0 & 0 & 1 \\
-1& 0 & 0 & 2 & 0 & 1 &-1& 0 \\
0 & 1 & 1 & 0 & 2 & 0 & 1 & 0 \\
-1& 0 & 0 & 1 & 0 & 2 & 0 & 1 \\
1 & 0 & 0 & -1& 1 & 0 & 2 & 0 \\
0 & 1 & 1 & 0 & 0 & 1 & 0 & 2 \\
\end{pmatrix}\\
    A_{-2}&=\frac{1}{2}\begin{pmatrix}
2 & 0 & \sqrt{-2}+1&\frac{1}{2}\sqrt{-2}\\
0 & 2 & \frac{1}{2}\sqrt{-2} & 1-\sqrt{-2}\\
1-\sqrt{-2} & -\frac{1}{2}\sqrt{-2} & 2 & 0\\
-\frac{1}{2}\sqrt{-2}& \sqrt{-2}+1 & 0 & 2\\
\end{pmatrix}
\end{align*}
\begin{align*}
B_{-2}&=\begin{pmatrix}
2 & 0 & 0 & 0 & 1 & 2 & 0 & 1 \\
0 & 4 & 0 & 0 & -2& 2 & -1& 0 \\
0 & 0 & 2 & 0 & 0 & 1 & 1 & -2 \\
0 & 0 & 0 & 4 & -1& 0 & 2 & 2 \\
1 & -2& 0 & -1& 2 & 0 & 0 & 0 \\
2 & 2 & 1 & 0 & 0 & 4 & 0 & 0 \\
0 & -1& 1 & 2 & 0 & 0 & 2 & 0 \\
1 & 0 & -2& 2 & 0 & 0 & 0 & 4 \\
\end{pmatrix}.
\end{align*}

Here, we give some examples of Hermitian lattices $L$ satisfying the conditions of Theorems \ref{Kodaira_dim_app}, \ref{Kodaira_dim_app2},  \ref{Kodaira_dim_app3} and  \ref{Kodaira_dim_app4}.
The definitions of the Hermitian lattices appearing in Proposition \ref{exist_app} are given there.

Below, we say \textit{a Hermitian lattice $L$ satisfies $(d,n,X)$} if $L$ is defined over $\OO_{\Q(\sqrt{d})}$, whose signature is $(1,n)$ and satisfies the condition of Theorem $X$.

\begin{prop}
\label{exist_app}
There exist Hermitian lattices satisfying the assumption of Theorems \ref{Kodaira_dim_app},  \ref{Kodaira_dim_app2},  \ref{Kodaira_dim_app3}, and  \ref{Kodaira_dim_app4}.
\end{prop}
\begin{proof}
It suffices to take $L$ as Table \ref{table:Hermitian_lattices_exist}.
\begin{center}
\renewcommand{\arraystretch}{1.5}
\begin{table}[h]
\caption{Hermitian lattices satisfying the assumptions.}
\label{table:Hermitian_lattices_exist}
    \begin{tabular}{|c||c|c|} \hline
     $(-1,5,\ref{Kodaira_dim_app})$&\begin{tabular}{c}
$L_{\mathbb{U}\oplus\mathbb{U}}\oplus L_{-1,\mathbb{E}_8}(-1)$, $L_{\mathbb{U}\oplus\mathbb{U}}\oplus L_{\mathbb{D}_8}(-1)$,\\ $ L_{\mathbb{U}\oplus\mathbb{U}}\oplus L_{-1,\mathbb{D}_4}(-1)\oplus L_{-1,\mathbb{D}_4}(-1)$ or Kond\=o's one \cite{Kondo3}\end{tabular}\\ \hline
      $(-1,5,\ref{Kodaira_dim_app2})$&$L_{-1,\mathbb{U}\oplus\mathbb{U}(2)}\oplus  L_{\mathbb{E}_8}(-2)$\\ \hline
      $(-2,5,\ref{Kodaira_dim_app})$&$L_{-2,\mathbb{U}\oplus\mathbb{U}(2)}\oplus L_{-2,\mathbb{D}_4}(-1)\oplus L_{-2,\mathbb{D}_4}(-1)$\\\hline
      $(-1,4,\ref{Kodaira_dim_app3})$&$L_{\mathbb{U}\oplus\mathbb{U}}\oplus L_{\mathbb{D}_6}(-1)$\\\hline
      $(-1,3,\ref{Kodaira_dim_app4})$&$L_{\mathbb{U}\oplus\mathbb{U}}(2)\oplus L_{-1,\mathbb{D}_4}$,  $L_{\mathbb{U}\oplus\mathbb{U}}(2)\oplus L_{-2,\mathbb{D}_4}$ or $L_{\mathbb{U}\oplus\mathbb{U}}\oplus L_{-1,\mathbb{D}_4}$\\\hline
$(-2,3,\ref{Kodaira_dim_app4})$&$L_{\mathbb{U}\oplus\mathbb{U}}(2)\oplus L_{-1,\mathbb{D}_4}$,  $L_{\mathbb{U}\oplus\mathbb{U}}(2)\oplus L_{-2,\mathbb{D}_4}$ or $ L_{\mathbb{U}\oplus\mathbb{U}(2)}\oplus L_{-2,\mathbb{D}_4}$\\\hline
    \end{tabular}
    \end{table}
  \renewcommand{\arraystretch}{1.0}
  \end{center}
\end{proof}


\begin{thebibliography}{99}
  \bibitem{Behrens}
  Behrens, N.,
  \textit{Singularities of ball quotients},
  Geom.\ Dedicata\ 159 (2012),\ 389-407. 

  \bibitem{infinite}
  Borcherds, R.,
  \textit{Automorphic forms on $\O_{s+2,2}(\R)$ and infinite products},
  Invent.\ Math.\ 132 (1998),\ no.\ 3,\ 491-562.

  \bibitem{BorcherdsLift}
  Borcherds, R.,
  \textit{Automorphic forms with singularities on Grassmannians},
  Invent.\ Math.\ 120 (1995),\ no.\ 1,\ 161-213.

  \bibitem{CN}
  Coulangeon, R., Nebe, G.,
  \textit{Dense lattices as Hermitian tensor products},
  Contemp.\ Math.,\ 587,\ Amer.\ Math.\ Soc.,\ Providence,\ RI,\ 2013.
 
  \bibitem{Freitag}
  Freitag, E.,
  \textit{Some modular forms related to cubic surfaces},
  Kyungpook Math.\ J.\ 43 (2003), no.\ 3, 433-462.

  \bibitem{Reflective}
  Gritsenko, V.,
  \textit{Reflective modular forms in algebraic geometry},
  preprint (2010),\ arXiv:1012.4155.
  
  
  \bibitem{Uniruledness}
  Gritsenko, V., Hulek, K.,
  \textit{Uniruledness of orthogonal modular varieties},
  J.\ Algebraic Geom.\ 23 (2014),\ no.\ 4,\ 711-725.
  
  \bibitem{GH}
  Gritsenko, V., Hulek, K.,
  \textit{Moduli of polarized Enriques surfaces},
  in K3 surfaces and their moduli,\ 55-72,\ Progr.\ Math.,\ 315,\ 2016.


  \bibitem{GHS}
  Gritsenko, V., Hulek, K., Sankaran, G.\ K.,
  \textit{The Kodaira dimension of moduli of K3 surfaces},
  Invent.\ Math.\ 169 (2007),\ no.\ 3,\ 519-567.
 
\bibitem{GN}
Gritsenko, V., Nikulin, V.,
\textit{Automorphic forms and Lorentzian Kac-Moody algebras. I},
Internat. J. Math. 9 (1998), no. 2, 153–199.
  
  \bibitem{HM}
  Hacon, C., Mckernan. J.,
  \textit{On Shokurov's rational connectedness conjecture},
 Duke Math. J. 138 (2007), no. 1, 119–136.

  \bibitem{Hentschel}
  Hentschel, M.,
  \textit{On Hermitian theta series and modular forms},
   Ph.D Thesis, RWTH Aachen University (2009). \\
   \url{http://publications.rwth-aachen.de/record/51221/files/Hentschel_Michael.pdf}

  \bibitem{Classification}
  Hentschel, M., Krieg, A., Nebe, G.,
  \textit{On the classification of even unimodular lattices with a complex structure},
  Int.\ J.\ Number Theory\ 8 (2012),\ no.\ 4,\ 983-992.

  \bibitem{Hofmann}
  Hofmann, E.,
  \textit{Borcherds products on unitary groups},
  Math.\ Ann.\ 358 (2014),\ no.\ 3-4,\ 799-832.


  \bibitem{Iyanaga}
  Iyanaga, K.,
  \textit{Class numbers of definite Hermitian forms},
  J.\ Math.\ Soc.\ Japan 21(1969),\ 359-374.

  \bibitem{Kondo1}
  Kond\={o}, S.,
  \textit{On the Kodaira dimension of the moduli spaces of K3 surfaces},
  Compositio.\ Math.\ 89 (1993),\ no.\ 3,\ 251-299.

  \bibitem{Kondo2}
  Kond\={o}, S.,
  \textit{On the Kodaira dimension of the moduli spaces of K3 surfaces. II},
  Compositio.\ Math.\ 116 (1999),\ no.\ 2,\ 111-117.

  \bibitem{Kondo3}
  Kond\={o}, S.,
  \textit{The moduli space of 8 points of $\mathbb{P}^1$ and automorphic forms},
  Algebraic geometry, 89–106, Contemp. Math., 422, Amer. Math. Soc., Providence, RI, 2007.

  \bibitem{Ma}
  Ma, S.,
  \textit{On the Kodaira dimension of orthogonal modular varieties},
  Invent.\ Math.\ 212 (2018),\ no.\ 3,\ 859-911.

  \bibitem{MOT}
  Ma, S., Ohashi, H., Taki, S.,
  \textit{Rationality of the moduli spaces of eisenstein K3 surfaces},
Trans. Amer. Math. Soc. 367 (2015), no. 12, 8643-8679.
  


  \bibitem{YoshikawaIV}
  Ma, S., Yoshikawa, K.,
  \textit{K3 surfaces with involution, equivalent analytic torsion, and automorphic forms on the moduli space, IV: the structure of invariant},
  preprint\ (2015),\ arXiv:1506.00437.

\bibitem{MO}
Maeda, Y., Odaka, Y.,
\textit{Fano Shimura varieties with mostly branched cusps},
Springer Proceedings in Mathematics \& Statistics(PROMS, volume 409) 633-664 (2023).
  
  \bibitem{MM}
  Miyaoka, Y., Mori, S.,
  \textit{A numerical criterion for uniruledness},
  Ann.\ of Math.\ (2) 124 (1986),\ no.\ 1,\ 65-69.
  
  \bibitem{Nikulin}
  Nikulin, V.\ V.,
  \textit{Integral symmetric bilinear forms and some of their applications},
  Math.\ USSR Izv.\ 14 (1980),\ 103-167.

  \bibitem{YoshikawaII}
  Yoshikawa, K.,
  \textit{K3 surfaces with involution, equivalent analytic torsion, and automorphic forms on the moduli space, II: A structure theorem for $r(M)>10$},
  J.\ Reine Angew.\ Math.\ 677 (2013),\ 15-70.



\end{thebibliography}
\end{document}